\newtheorem{prop}{Proposition}[section]
\newtheorem{cor}[prop]{Corollary}
\newtheorem{thm}[prop]{Theorem}
\title{Alternating, pattern-avoiding permutations}
\author{Joel Brewster Lewis}
\begin{document}
\begin{abstract}
We study the problem of counting alternating permutations avoiding collections of permutation patterns including $132$.  We construct a bijection between the set $S_n(132)$ of $132$-avoiding permutations and the set $A_{2n + 1}(132)$ of alternating, $132$-avoiding permutations.  For every set $p_1, \ldots, p_k$ of patterns and certain related patterns $q_1, \ldots, q_k$, our bijection restricts to a bijection between $S_n(132, p_1, \ldots, p_k)$, the set of permutations avoiding $132$ and the $p_i$, and $A_{2n + 1}(132, q_1, \ldots, q_k)$, the set of alternating permutations avoiding $132$ and the $q_i$.  This reduces the enumeration of the latter set to that of the former.
\end{abstract}
\maketitle
\section{Introduction}
A classical problem asks for the number of permutations that avoid a certain permutation pattern.  This problem has received a great deal of attention (e.g., \cite{1342, 1234}) and has led to a number of interesting variations including the enumeration of permutations avoiding several patterns simultaneously \cite{le, simion} and the enumeration of special classes of pattern-avoiding permutations (e.g., involutions \cite{egge, simion} and derangements \cite{der}).  In this paper we consider a variation on this problem first studied by Mansour in \cite{mans}, namely the enumeration of \emph{alternating} permutations avoiding certain patterns.  Section \ref{back} provides definitions and some background.  In Section \ref{keybij} we construct a bijection $\varphi: S_n(132) \to A_{2n + 1}(132)$ that preserves pattern information and so allows us to relate many instances of our problem to the classical problem.  In particular, it provides a bijection between $S_n(132, p_1, \ldots, p_k)$ and $A_{2n + 1}(132, q_1, \ldots, q_k)$ for every set of patterns $p_1, \ldots, p_k$ and certain related patterns $q_1, \ldots, q_k$.  This reduces the enumeration of many classes of alternating, pattern-avoiding permutations to similar but simpler problems involving all permutations.  We give a few explicit examples, one of which (Corollary \ref{12k}) are already known by recursive, generating functional methods \cite{mans}; we believe the others to be original.

\section{Background and Definitions}\label{back}
A \emph{permutation} $w = w_1w_2\cdots w_n$ of length $n$ is a sequence containing each of the integers $1$ through $n$ exactly once.  A permutation is \emph{alternating} if $w_{1} < w_{2} > w_3 < w_4 > \ldots$.  (Note that in the terminology of \cite{EC1}, these ``up-down'' permutations are \emph{reverse alternating} while alternating permutations are ``down-up'' permutations, but that any result on either set can be translated into a result on the other via complementation, i.e. by considering $w'$ such that $w'_i = n + 1 - w_i$, so the pattern $132$ would be replaced by $312$ and so on.)  

Given a permutation $p \in S_k$ and a sequence $w = w_1\cdots w_n$, we say that $w$ \emph{contains the pattern $p$} if there exists a set of indices $1 \leq i_1 < i_2 < \ldots < i_k \leq n$ such that the subsequence $w_{i_1}w_{i_2}\cdots w_{i_k}$ of $w$ is order-isomorphic to $p$, i.e. the relative order of elements in the two sequences is the same.  Otherwise, $w$ is said to \emph{avoid} $p$.  Given patterns $p_1, \ldots, p_k$, we denote by $S_n(p_1, \ldots, p_k)$ the set of permutations of length $n$ avoiding all of the $p_i$ and by $A_n(p_1, \ldots, p_k)$ the set of alternating permutations of length $n$ avoiding all of the $p_i$, and we set $s_n(p_1, \ldots, p_k) = |S_n(p_1, \ldots, p_k)|$ and $a_n(p_1, \ldots, p_k) = |A_n(p_1, \ldots, p_k)|$.

The set $S_{n+1}(132)$ can be easily enumerated, as follows: if $w \in S_{n+1}(132)$, $w_{k+1} = n+1$ and $1 \leq i < k+1 < j \leq n+1$ then $w_i > w_j$ or else $w_i w_{k+1} w_j$ is a $132$-pattern contained in $w$.  Thus, $\{w_1, \ldots, w_{k}\} = \{n - k + 1, \ldots, n\}$, $\{w_{k + 2}, \ldots, w_{n+1}\} = \{1, \ldots, n - k\}$ and the sequences $w_1\cdots w_{k}$, $w_{k + 2}\cdots w_n$ are themselves $132$-avoiding.  Conversely, it's easy to check that if for some $k$ the permutation $w' = w'_1\cdots w'_{n+1}$ is such that $w'_{k+1} = n+1$, $\{w'_{k + 2}, \ldots, w'_{n+1}\} = \{1, \ldots, n - k\}$, $\{w'_{1}, \ldots, w'_{k}\} = \{n - k + 1, \ldots, n\}$ and the sequences $w'_1\cdots w'_{k}$, $w'_{k + 2}\cdots w'_{n+1}$ are $132$-avoiding then $w' \in S_{n+1}(132)$.  It follows immediately that
\[
s_{n+1}(132) = \sum_{k = 0}^n s_k(132)\cdot s_{n - k}(132)
\]
with initial value $s_0(132) = 1$, so $s_n(132)$ is equal to the $n$th Catalan number $C_n = \frac{1}{n + 1}\binom{2n}{n}$.  

A nearly identical argument can be applied to $a_{2n + 1}(132)$ -- one need only note that the position of the maximal element $2n + 3$ in a permutation $w \in A_{2n + 3}(132)$ must be even, so
\[
a_{2n + 3}(132) = \sum_{k = 0}^n a_{2k + 1}(132)\cdot a_{2n - 2k + 1}(132)
\]
with $a_1(132) = 1$ and thus $a_{2n + 1}(132) = C_n$ and $a_{2n + 1}(132) = s_n(132)$.  In the next section, we provide a bijective proof of this result.

\section{Key bijection}\label{keybij}

Given any permutation $w \in S_n$, we can bijectively associate a labeled, decreasing binary tree $T(w)$ as in Chapter 1 of \cite{EC1}: if $w_k$ is the maximal entry of $w$, we label the root with $w_k$, let the left subtree below the root be $T(w_1\cdots w_{k - 1})$ and let the right subtree be $T(w_{k + 1}\cdots w_n)$.  Note that for any $i \in [n]$, the vertex labeled $w_i$ has a left child in $T(w)$ if and only if $i > 1$ and $w_i > w_{i - 1}$ and has a right child if and only if $i < n$ and $w_i > w_{i + 1}$.  In particular, $T(w)$ is complete (i.e. every vertex has either zero or two children) if and only if $w$ is an alternating permutation of odd length.

If $w$ is $132$-avoiding then for any vertex $v$ in $T(w)$, each label on the left subtree at $v$ is larger than every label on the right subtree at $v$.  (This is essentially the same observation that we used to derive our recursion for $s_n(132)$ above.)  Erasing all labels gives a bijection between decreasing, labeled binary trees with this property and unlabeled binary trees and we will consider this bijection to be an identification, i.e. we will treat unlabeled trees and trees labeled in this way as interchangeable.

For any unlabeled binary tree $T$ we may construct the \emph{completion} $C(T)$ by adding vertices to $T$ so that every vertex of $T$ has two children in $C(T)$ and every vertex of $C(T)$ that is not also a vertex of $T$ has zero children in $C(T)$.  This operation is a bijection between unlabeled binary trees on $n$ nodes and unlabeled complete binary trees on $2n + 1$ nodes; to reverse it, erase all leaves.

We can use these three operations to construct a bijection between $S_n(132)$ and $A_{2n + 1}(132)$: beginning with any permutation $w \in S_n(132)$, construct the labeled tree $T(w)$.  Erase the labels from this tree and take its completion.  Take the associated labeled tree and apply the inverse of the bijection $T$.  The resulting permutation is an alternating, $132$-avoiding permutation of length $2n + 1$, i.e. an element of $A_{2n + 1}(132)$.  Each step is bijective, so the composition of steps is bijective.  We denote this bijection by $\varphi: S_n(132) \to A_{2n + 1}(132)$.  Figure \ref{applyphi} illustrates the application of $\varphi$ to the permutation $2341$.

\begin{figure}
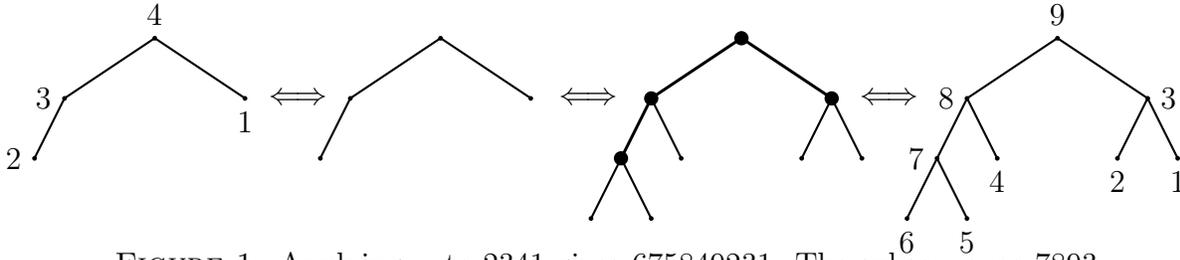

\begin{center}
\psset{unit=.4cm}
\pspicture(0, -2)(38,4)
\psdots*[dotscale=.5](0,0)(1,2)(4,4)(7,2)
\psline(0,0)(1,2)(4,4)(7,2)
\uput[u](4,4){$4$}
\uput[l](1,2){$3$}
\uput[l](0,0){$2$}
\uput[d](7,2){$1$}
\uput[r](7.35,2){$\Longleftrightarrow$}
\psdots*[dotscale=.5](9.5,0)(10.5,2)(13.5,4)(16.5,2)
\psline(9.5,0)(10.5,2)(13.5,4)(16.5,2)
\uput[r](17,2){$\Longleftrightarrow$}
\psdots*[dotscale=1.5](19.5,0)(20.5,2)(23.5,4)(26.5,2)
\psdots*[dotscale=.5](18.5,-2)(20.5,-2)(21.5,0)(25.5,0)(27.5,0)
\psline[linewidth=.1](19.5,0)(20.5,2)(23.5,4)(26.5,2)
\psline(18.5,-2)(19.5,0)(20.5,-2)
\psline(20.5,2)(21.5,0)
\psline(25.5,0)(26.5,2)(27.5,0)
\uput[0](27,2){$\Longleftrightarrow$}
\psdots*[dotscale=.5](30,0)(31,2)(34,4)(37,2)(29,-2)(31,-2)(32,0)(36,0)(38,0)
\psline(30,0)(31,2)(34,4)(37,2)
\psline(29,-2)(30,0)(31,-2)
\psline(31,2)(32,0)
\psline(36,0)(37,2)(38,0)
\uput[u](34,4){$9$}
\uput[l](31,2){$8$}
\uput[l](30,0){$7$}
\uput[d](29,-2){$6$}
\uput[d](31,-2){$5$}
\uput[d](32,0){$4$}
\uput[r](37,2){$3$}
\uput[d](36,0){$2$}
\uput[d](38,0){$1$}
\endpspicture
\caption{Applying $\varphi$ to $2341$ gives $675849231$.  The subsequence $7893$ of even-indexed entries is order-isomorphic to $2341$.}
\label{applyphi}
\end{center}
\end{figure}

Note that $\varphi^{-1}$ has a particularly nice form: erasing the leaves of $T(\varphi(w))$ is the same as erasing the odd-indexed entries in $\varphi(w)$, so $w$ is just the permutation order-isomorphic to the sequence $(\varphi(w)_2, \varphi(w)_4, \ldots)$ and $\varphi(w)$ is the unique member of $A_{2n + 1}(132)$ whose sequence of even-indexed entries is order-isomorphic to $w$.  It follows immediately that if $w$ contains a pattern $p$ then $\varphi(w)$ does as well.  In fact, we can say substantially more: 

\begin{prop}\label{main}
If a permutation $w \in S_n(132)$ contains the pattern $p \in S_k(132)$ then $\varphi(w)\in A_{2n + 1}(132)$ contains $\varphi(p)\in A_{2k + 1}(132)$.
\end{prop}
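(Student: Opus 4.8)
The plan is to understand how the bijection $\varphi$ interacts with the tree structure and with containment of patterns. The key observation I would start from is the characterization of $\varphi^{-1}$ given just above the statement: $\varphi(w)$ is the unique element of $A_{2n+1}(132)$ whose subsequence of even-indexed entries $(\varphi(w)_2, \varphi(w)_4, \ldots)$ is order-isomorphic to $w$. Equivalently, in tree language, the unlabeled tree of $\varphi(w)$ is the completion $C(T(w))$, and the entries of $\varphi(w)$ occupying positions corresponding to the \emph{internal} (original) vertices of $C(T(w))$ are exactly the even-indexed entries, which reproduce $w$. So I would reformulate pattern containment in terms of the trees: $w$ contains $p$ means $T(w)$ contains a certain substructure coming from $T(p)$.

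First I would make precise what ``$w$ contains $p$'' translates to on the level of $132$-avoiding trees. Because $w$ and $p$ are both $132$-avoiding, their trees $T(w)$ and $T(p)$ are (after forgetting labels) ordinary binary trees, and the left-subtree-dominates-right-subtree property means the labels are recovered canonically. I would argue that an occurrence of $p$ in $w$ corresponds to choosing $k$ vertices in $T(w)$ whose induced binary-tree structure (in terms of the ancestor/left-right relations that the tree records) matches $T(p)$; this is where I would need the fact, noted in the excerpt, that for a $132$-avoider the vertex labeled $w_i$ has a left (resp.\ right) child exactly according to ascents and descents, so the in-order reading of the tree recovers the one-line notation and subsequences correspond to sub-configurations of vertices respecting the in-order and the max-is-root (decreasing) structure.

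Next I would push this configuration through the completion operation. Since $C$ only \emph{adds} leaves and never disturbs the relative position of existing vertices, a copy of $T(p)$ sitting inside $T(w)$ gives a copy of $T(p)$ sitting inside $C(T(w)) = $ (tree of $\varphi(w)$). The crucial point is that I do not merely want a copy of $p$ inside $\varphi(w)$ (that much follows trivially from the even-indexed remark); I want a copy of $\varphi(p)$, which is the \emph{completed} pattern. So I would show that the completion of the copy of $T(p)$ inside $C(T(w))$ — i.e.\ the original $p$-vertices together with the leaves of $C(T(w))$ that complete them — reads off a subsequence of $\varphi(w)$ order-isomorphic to $\varphi(p)$. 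Concretely, each internal vertex of the sub-copy contributes its even-indexed entry and each completing leaf contributes an adjacent odd-indexed entry of $\varphi(w)$, and the in-order reading of this enlarged set of $2k+1$ vertices is order-isomorphic to the in-order reading of $C(T(p))$, which is $\varphi(p)$.

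The main obstacle I expect is the last matching step: verifying that the leaves of $C(T(w))$ that complete the embedded copy of $T(p)$ can be chosen so that the induced $2k+1$ entries are genuinely order-isomorphic to $\varphi(p)$, rather than to some other completion of $p$. The difficulty is that a vertex of the sub-copy that is internal in $T(p)$ but whose relevant child in $T(w)$ is ``far away'' must still be completed by a leaf sitting in the correct left/right position relative to the chosen vertices; I would need to check that the completion leaves of $C(T(w))$ lie in the right in-order gaps and carry values consistent with the $132$-avoiding (left-dominates-right) labeling of $\varphi(p)$. I would handle this by an induction on the structure of $T(p)$ using the recursive block decomposition of $132$-avoiding permutations from Section~\ref{back}: write $p$ as (left block)(max)(right block), apply $\varphi$ compatibly to each block, and invoke the inductive hypothesis on the two smaller $132$-avoiding patterns, matching this against the corresponding recursive decomposition of an occurrence of $p$ inside $w$.
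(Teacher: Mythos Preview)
Your tree-based plan is the paper's own approach, and the idea of locating the $k$ occurrence vertices $v_1,\ldots,v_k$ inside $C(T(w))$ and then adjoining $k+1$ further vertices to obtain a copy of $C(T(p))$ is exactly right. The gap is in \emph{which} $k+1$ vertices you adjoin. You repeatedly say ``the leaves of $C(T(w))$ that complete them'' and ``each completing leaf contributes an adjacent odd-indexed entry of $\varphi(w)$,'' but this is false in general: the needed vertices need not be leaves of $C(T(w))$ at all. Take $w=2341$, $p=21$, and the occurrence $w_3w_4=41$. In $T(p)$ the root $2$ lacks a left child, so you must supply a left child for $v_1$, the vertex of $C(T(w))$ corresponding to $w_3=4$. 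But $v_1$ is the root of $C(T(w))$, and its left child corresponds to $w_2=3$; that child is an \emph{internal} vertex of $C(T(w))$ and sits at an even-indexed position of $\varphi(w)$, not an odd one. No leaf of $C(T(w))$ will do the job here while keeping the in-order and label comparisons consistent with $\varphi(p)$.

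The paper's remedy is direct and makes your induction unnecessary: for each $j$, adjoin the \emph{immediate} left (respectively right) child of $v_j$ in $C(T(w))$, whether or not it is a leaf of $C(T(w))$, precisely when the vertex labeled $p_j$ in $T(p)$ acquires a new left (respectively right) child upon completion to $C(T(p))$. One then checks that for every pair among the resulting $2k+1$ vertices the in-order comparison and the label comparison agree with those of the corresponding pair in $C(T(p))$; since in a $132$-avoiding tree these two comparisons determine the pattern, the selected vertices form an instance of $\varphi(p)$ in $\varphi(w)$. Your proposed induction on the block decomposition $p=p_L\cdot k\cdot p_R$ does not work as stated: the inductive hypothesis only yields occurrences of $\varphi(p_L)$ and $\varphi(p_R)$ somewhere in $\varphi(w)$, with no control over their relative positions or values, so you cannot splice them together with the entry corresponding to $w_{i_m}$. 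Strengthening the hypothesis to track positions would essentially rebuild the direct construction above.
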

Note that the restriction $p \in S_k(132)$ is actually no restriction at all: pattern containment is transitive, so if $w$ avoids $132$ then $w$ automatically avoids all patterns that contain $132$.  
\begin{proof}
Suppose $w$ contains $p$.  Choose a sequence $1 \leq i_1 < i_2 < \ldots < i_k \leq n$ such that $w_{i_1}w_{i_2}\cdots w_{i_k}$ is an instance of $p$ contained in $w$.  Note that each vertex of $T(w)$ corresponds naturally via our construction to a vertex of $T(\varphi(w))$, and similarly for $T(p)$ and $T(\varphi(p))$.  For each $j$, denote by $v_{j}$ the vertex of $T(\varphi(w))$ corresponding to the vertex labeled $w_{i_j}$ in $T(w)$ and denote by $v'_j$ the vertex of $T(\varphi(p))$ corresponding to the vertex labeled $p_j$ in $T(p)$.  

We build a set of $2k + 1$ vertices of $T(\varphi(w))$ as follows: first, we include the vertices $v_{j}$ for $1 \leq j \leq k$.  Then, for each $j$, we add the left child of $v_{j}$ to our set if and only if $v'_j$ has a left child in $T(\varphi(p))$ but the vertex labeled $p_j$ in $T(p)$ does not -- that is, take the left child of $v_j$ if and only if a new left child was added to the vertex labeled $p_j$ when passing from $T(p)$ to $T(\varphi(p))$.  Do the same for right children.  Figure \ref{doublephi} illustrates this process for $p = 4312$ and $w = 5647231$.  

The relative order on the labels of vertices in this collection is determined entirely by the relative position of these vertices in the tree,\footnote{By the ``relative position'' of vertices $a$ and $b$, we mean the following: either one of $a$ and $b$ is a descendant of the other, in which case the ancestor has the larger label, or they have some nearest common ancestor $c$.  In the latter case, we say $a$ is ``relatively to the left'' of $b$ if $a$ lies in the left subtree of $c$ and $b$ lies in the right subtree of $c$.  By the $132$-avoidance of the permutations associated to these trees, ``being relatively left'' implies ``having a larger label.''} and the relative positions of these vertices in $T(\varphi(w))$ are the same as their relative positions in $T(\varphi(p))$.  Thus the labels on these vertices form a $\varphi(p)$-pattern contained in $\varphi(w)$.
\end{proof}

\begin{figure}
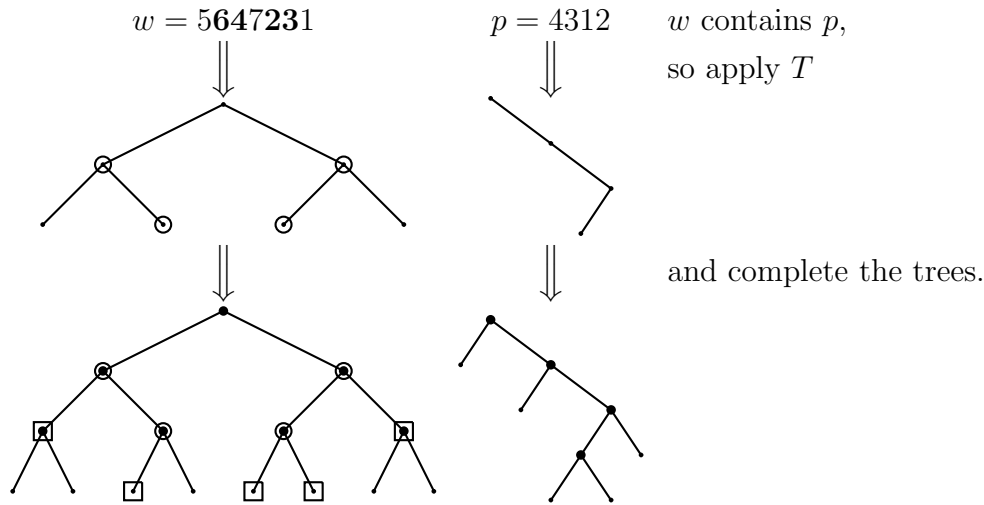

\begin{center}
\psset{unit=.4cm}
\begin{tabular}{ccl}
$w = 5{\bf 64}7{\bf 23}1$ & $p = 4312$ &$w$ contains $p$, \\
$\Big\Downarrow$ & $\Big\Downarrow$ & so apply $T$ \\
\pspicture(1,1.7)(13,6)
\psdots*[dotscale=.5](1,2)(3,4)(5,2)(7,6)(9,2)(11,4)(13,2)
\psline(1,2)(3,4)(5,2)
\psline(9,2)(11,4)(13,2)
\psline(3,4)(7,6)(11,4)
\pscircle(3,4){.3}
\pscircle(5,2){.3}
\pscircle(11,4){.3}
\pscircle(9,2){.3}
\endpspicture 
& 
\pspicture(2,1.5)(6,6)
\psdots*[dotscale=.5](2,6)(4,4.5)(6,3)(5,1.5)
\psline(2,6)(4,4.5)(6,3)(5,1.5)
\endpspicture
& \\
$\Big\Downarrow$ & $\Big\Downarrow$ & and complete the trees. \\
\pspicture(0,-.3)(14,6.3)
\psdots*[dotscale=1](1,2)(3,4)(5,2)(7,6)(9,2)(11,4)(13,2)
\psline(1,2)(3,4)(5,2)
\psline(9,2)(11,4)(13,2)
\psline(3,4)(7,6)(11,4)
\pscircle(3,4){.3}
\pscircle(5,2){.3}
\pscircle(11,4){.3}
\pscircle(9,2){.3}
\psdots*[dotscale=.5](0,0)(2,0)(4,0)(6,0)(8,0)(10,0)(12,0)(14,0)
\psline(0,0)(1,2)(2,0)
\psline(4,0)(5,2)(6,0)
\psline(8,0)(9,2)(10,0)
\psline(12,0)(13,2)(14,0)
\psline(1.3,2.3)(1.3,1.7)(.7,1.7)(.7,2.3)(1.3,2.3)
\psline(4.3,.3)(4.3,-.3)(3.7,-.3)(3.7,.3)(4.3,.3)
\psline(8.3,.3)(8.3,-.3)(7.7,-.3)(7.7,.3)(8.3,.3)
\psline(10.3,.3)(10.3,-.3)(9.7,-.3)(9.7,.3)(10.3,.3)
\psline(13.3,2.3)(13.3,1.7)(12.7,1.7)(12.7,2.3)(13.3,2.3)
\endpspicture
& 
\pspicture(1,0)(7,6)
\psdots*[dotscale=1](2,6)(4,4.5)(6,3)(5,1.5)
\psline(2,6)(4,4.5)(6,3)(5,1.5)
\psdots*[dotscale=.5](1,4.5)(3,3)(7,1.5)(4,0)(6,0)
\psline(1,4.5)(2,6)
\psline(3,3)(4,4.5)
\psline(7,1.5)(6,3)
\psline(4,0)(5,1.5)(6,0)
\endpspicture & 
\end{tabular}
\caption{The subsequence $6423$ of $w = 5647231$ is an instance of $p = 4312$.  We apply $T$ and complete both trees.  For each (circled) node of $C(T(w))$ corresponding to a node of $T(p)$ we select (box) a child if and only if the corresponding child was added to $T(p)$ in the completion $C(T(p))$.  The resulting subsequence $13\,14\,9\,10\,5\,6\,4\,7\,3$ is an instance of $\varphi(p) = 896734251$ in $\varphi(w) = 12\,13\,11\,14\,9\,10\,8\,15\,5\,6\,4\,7\,2\,3\,1$.}
\end{center}
\label{doublephi}
\end{figure}

The converse of this result is also true, and we prove a modest strengthening of it.
\begin{prop}\label{conv} 
Suppose $\varphi(w)\in A_{2n + 1}(132)$ contains a pattern $q$ and $q$ contains $p$, with the additional restriction that there is some subsequence of $q$ order-isomorphic to $p$ that contains no left-to-right minima of $q$.  Then $w \in S_n(132)$ contains $p$.
\end{prop}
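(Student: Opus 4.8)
The plan is to track, for a fixed copy of $q$ inside $\varphi(w)$, which of its entries come from the original tree $T(w)$ and which come from the leaves added during completion, and to show that the entries arising from added leaves can only occupy the positions of $q$ that are its left-to-right minima. Concretely, fix indices $j_1 < \cdots < j_m$ (with $m$ the length of $q$) so that $\varphi(w)_{j_1}\cdots \varphi(w)_{j_m}$ is an instance of $q$, and let $u_t$ be the vertex of $C(T(w))$ corresponding to $\varphi(w)_{j_t}$. The vertices of $C(T(w))$ are of two kinds: the internal vertices, which are exactly the even-indexed entries of $\varphi(w)$ and which form a subsequence order-isomorphic to $w$, and the added leaves, which are the odd-indexed entries. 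Thus it suffices to prove that the subsequence of $q$ order-isomorphic to $p$ --- which by hypothesis avoids the left-to-right minima of $q$ --- lands entirely on internal vertices: reading off the corresponding even-indexed entries then gives, via the identification of even-indexed entries with $w$, an instance of $p$ in $w$.

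The crux is the following claim: every added leaf of $C(T(w))$ is a left-to-right minimum of the whole permutation $\varphi(w)$. Granting this, if some $u_t$ is a leaf then $\varphi(w)_{j_t}$ is smaller than every earlier entry of $\varphi(w)$, in particular smaller than each $\varphi(w)_{j_s}$ with $s < t$; hence $t$ is a left-to-right minimum of the instance $\varphi(w)_{j_1}\cdots\varphi(w)_{j_m}$, i.e.\ a left-to-right minimum of $q$. Equivalently, every position of $q$ that is not a left-to-right minimum is realized by an internal vertex. Since the chosen copy of $p$ avoids the left-to-right minima of $q$, all of its entries are internal, which is exactly what the reduction above requires.

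To prove the claim I would argue directly from the tree, using the relative-position principle recorded in the footnote (relatively left implies larger label, by $132$-avoidance). Let $u$ be an added leaf and let $x$ be any vertex preceding $u$ in the in-order reading. Because $u$ is a leaf it has no descendants, so $x$ is not a descendant of $u$; there are then two cases. If $x$ is an ancestor of $u$, then $x$ carries the larger label, since ancestors dominate their descendants in a decreasing tree. Otherwise $x$ and $u$ have a nearest common ancestor $c$, and since $x$ comes before $u$ in in-order, $x$ lies in the left subtree and $u$ in the right subtree at $c$; by $132$-avoidance every label in the left subtree exceeds every label in the right subtree, so again $x$ has the larger label. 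In either case $x > u$, so $u$ is smaller than all entries preceding it and is therefore a left-to-right minimum.

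I expect the only real subtlety to be the bookkeeping that separates left-to-right minima of the pattern $q$ from left-to-right minima of the ambient permutation $\varphi(w)$; the claim is precisely what bridges them, converting a global minimality statement (leaves are minima of all of $\varphi(w)$) into the local statement needed about $q$. Everything else follows directly from the decreasing, $132$-avoiding tree structure already established. It is worth noting that leaf-ness is exactly what fails for internal vertices --- an internal vertex always has a smaller in-order predecessor in its left subtree, hence is never a left-to-right minimum --- which is also why the hypothesis that the copy of $p$ avoids the left-to-right minima of $q$ cannot simply be dropped.
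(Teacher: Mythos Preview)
Your proof is correct and follows essentially the same strategy as the paper: show that the odd-indexed entries of $\varphi(w)$ (the added leaves) are its left-to-right minima, deduce that any entry of the chosen $q$-instance that is not a left-to-right minimum of $q$ must sit at an even index, and conclude that the $p$-instance lies among the even-indexed entries and hence in $w$. The only difference is in how the key claim is established: you argue via the tree structure and in-order traversal, whereas the paper gives a shorter permutation-level argument, observing directly that $\varphi(w)_{2k-1} > \varphi(w)_{2k+1}$ for all $k$ (since otherwise $\varphi(w)_{2k-1}\,\varphi(w)_{2k}\,\varphi(w)_{2k+1}$ would be a $132$-pattern), so the odd-indexed entries strictly decrease and are exactly the left-to-right minima.
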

\begin{proof}
If $\varphi(w)_{2k - 1} < \varphi(w)_{2k + 1}$ then $\varphi(w)_{2k - 1} < \varphi(w)_{2k + 1} < \varphi(w)_{2k}$ is a $132$-pattern contained in $\varphi(w)$, a contradiction, so $\varphi(w)_1 > \varphi(w)_3 > \varphi(w)_5 > \ldots$ and the left-to-right minima of $\varphi(w)$ are exactly the odd-indexed entries.  Select an instance of $q$ in $\varphi(w)$ and choose the special subsequence corresponding to $p$.  No entry of this subsequence is a left-to-right minimum in our instance of $q$, so no entry of this subsequence is a left-to-right minimum in $\varphi(w)$.  Then every entry of this subsequence (an instance of the pattern $p$) occurs among the even-indexed entries of $\varphi(w)$.  Since the sequence of even-indexed entries is order-isomorphic to $w$, $p$ is contained in $w$.
\end{proof}
The converse of Proposition \ref{main} follows because the even-indexed entries of $\varphi(p)$ are an instance of $p$ including no left-to-right minima in $\varphi(p)$, so if $\varphi(w)$ contains $\varphi(p)$ then $w$ contains $p$.

\section{Consequences}\label{consequences}
We can rephrase the results of the previous section as follows: 
\begin{thm}
For any collection of $132$-avoiding patterns $p_1, \ldots, p_k$, choose patterns $q_1, \ldots, q_k$ such that $q_i$ is contained in $\varphi(p_i)$ and there is an instance of $p_i$ contained in $q_i$ containing no left-to-right minima of $q_i$.  Then $\varphi$ is a bijection between $S_n(132, p_1, \ldots, p_k)$ and $A_{2n + 1}(132, q_1, \ldots, q_k)$, and in particular these sets are equinumerous.\end{thm}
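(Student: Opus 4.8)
The plan is to show that $\varphi$, already known to be a bijection $S_n(132) \to A_{2n+1}(132)$, restricts to a bijection on the stated subsets. Since $\varphi$ is injective and surjective on the full sets, it suffices to prove that for a permutation $w \in S_n(132)$, we have $w \in S_n(132, p_1, \ldots, p_k)$ if and only if $\varphi(w) \in A_{2n+1}(132, q_1, \ldots, q_k)$. In other words, I would show that $w$ avoids every $p_i$ exactly when $\varphi(w)$ avoids every $q_i$; equivalently, by taking contrapositives, that $w$ contains some $p_i$ if and only if $\varphi(w)$ contains the corresponding $q_i$. This reduces the theorem to a pair of implications, one for each direction, each of which is already essentially established by the two preceding propositions.

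First I would handle the forward direction. Suppose $w$ contains $p_i$ for some $i$. By Proposition \ref{main}, $\varphi(w)$ contains $\varphi(p_i)$. By hypothesis $q_i$ is contained in $\varphi(p_i)$, and since pattern containment is transitive, $\varphi(w)$ contains $q_i$. Hence $\varphi(w) \notin A_{2n+1}(132, q_1, \ldots, q_k)$. Contrapositively, if $\varphi(w)$ avoids all the $q_i$ then $w$ avoids all the $p_i$. I should be a little careful here only to the extent of noting that the hypothesis ``$q_i$ is contained in $\varphi(p_i)$'' is exactly what licenses the transitivity step; no further work is needed.

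Next I would handle the reverse direction, which is where Proposition \ref{conv} does the real work and where the extra hypothesis on $q_i$ is essential. Suppose $\varphi(w)$ contains $q_i$ for some $i$. The hypothesis guarantees there is an instance of $p_i$ inside $q_i$ using no left-to-right minima of $q_i$, so the triple $(\varphi(w), q_i, p_i)$ satisfies precisely the conditions of Proposition \ref{conv}. Applying that proposition yields that $w$ contains $p_i$, so $w \notin S_n(132, p_1, \ldots, p_k)$. Contrapositively, if $w$ avoids all the $p_i$ then $\varphi(w)$ avoids all the $q_i$.

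Combining the two directions, $\varphi$ carries $S_n(132, p_1, \ldots, p_k)$ into $A_{2n+1}(132, q_1, \ldots, q_k)$ and $\varphi^{-1}$ carries $A_{2n+1}(132, q_1, \ldots, q_k)$ into $S_n(132, p_1, \ldots, p_k)$; since $\varphi$ is already a bijection between the ambient sets, this restriction is a bijection, and in particular the two subsets are equinumerous. I do not anticipate a genuine obstacle, as the theorem is essentially a repackaging of the two propositions; the only point demanding care is to apply Proposition \ref{conv} with the correct roles for $p$ and $q$ and to verify that its hypothesis matches the theorem's hypothesis on the $q_i$ verbatim, which it does by design.
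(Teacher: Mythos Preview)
Your proposal is correct and follows essentially the same approach as the paper: both arguments combine Proposition~\ref{main}, Proposition~\ref{conv}, and transitivity of pattern containment. The only difference is organizational---you prove the element-level equivalence ``$w$ contains $p_i$ iff $\varphi(w)$ contains $q_i$'' directly, whereas the paper phrases the same three ingredients as a sandwich of set inclusions $\varphi(S_n(132,p_1,\ldots,p_k)) \subseteq A_{2n+1}(132,q_1,\ldots,q_k) \subseteq A_{2n+1}(132,\varphi(p_1),\ldots,\varphi(p_k)) \subseteq \varphi(S_n(132,p_1,\ldots,p_k))$.
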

Note that $\varphi(p_i)$ is always a valid choice for $q_i$, but that there may also be others.  Also, because $\varphi$ is a bijection on classes of pattern-avoiding permutations and on their complements, this restatement does not actually capture the full strength of our previous results.
\begin{proof}
We have by Proposition \ref{main} that the image of $S_n(132, p_1, \ldots, p_k)$ under $\varphi$ contains $A_{2n + 1}(132, \varphi(p_1), \ldots, \varphi(p_k))$ while we have by Proposition \ref{conv} that $\varphi$ is an injection from $S_n(132, p_1, \ldots, p_k)$ to $A_{2n + 1}(132, q_1, \ldots, q_k)$.  By transitivity of pattern containment, $A_{2n + 1}(132, \varphi(p_1), \ldots, \varphi(p_k)) \supseteq A_{2n + 1}(132, q_1, \ldots, q_k)$.  Thus in fact $\varphi$ is a bijection between $S_n(132, p_1, \ldots, p_k)$ and $A_{2n + 1}(132, q_1, \ldots, q_k)$, as claimed.\end{proof}  

As a result of this theorem, a large class of enumeration problems for pattern-avoiding alternating permutations can be expressed as enumeration problems for pattern-avoiding (standard) permutations.  We give three examples involving a single pattern.

\begin{cor}\label{12k}
For all $n \geq 0$, $k \geq 1$, $s_n(132, 12\cdots k) = a_{2n + 1}(132, 12\cdots (k + 1))$.  As a result, $w \in S_n(132)$ has longest increasing subsequence of length $k$ if and only if $\varphi(w) \in A_{2n + 1}(132)$ has longest increasing subsequence of length $k + 1$.
\end{cor}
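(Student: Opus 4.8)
The plan is to funnel both assertions through a single lemma relating longest increasing subsequences across $\varphi$. Writing $\ell(u)$ for the length of a longest increasing subsequence of a permutation $u$, I claim that for every $w \in S_n(132)$ we have $\ell(\varphi(w)) = \ell(w) + 1$. Granting this, the ``as a result'' clause is immediate, and the enumeration identity drops out of the Theorem.

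To prove the lemma I would use the two structural facts about $v := \varphi(w)$ that are already available. First, the even-indexed entries $v_2, v_4, \ldots, v_{2n}$ are order-isomorphic to $w$; this is the defining property of $\varphi$. Second, the odd-indexed entries satisfy $v_1 > v_3 > v_5 > \cdots$, i.e.\ they are strictly decreasing, as established at the start of the proof of Proposition \ref{conv}. For the upper bound, since the odd-indexed entries form a decreasing sequence, any increasing subsequence of $v$ uses at most one of them; deleting that one entry leaves an increasing subsequence lying entirely among the even-indexed entries, whose length is at most $\ell(w)$, so $\ell(v) \le \ell(w) + 1$. For the lower bound, take a longest increasing subsequence among the even-indexed entries (of length $\ell(w)$), say beginning at position $2i$; because $v$ is an up-down permutation we have $v_{2i-1} < v_{2i}$ with $2i-1 < 2i$, so prepending $v_{2i-1}$ produces an increasing subsequence of length $\ell(w)+1$. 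Hence $\ell(v) = \ell(w)+1$.

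With the lemma in hand, $w$ has $\ell(w) = k$ exactly when $\ell(\varphi(w)) = k+1$, which is the second sentence of the corollary. For the counting identity I would apply the Theorem with the single pattern $p_1 = 12\cdots k$ (which avoids $132$) and the choice $q_1 = 12\cdots(k+1)$, and verify its two hypotheses: the pattern $12\cdots(k+1)$ is contained in $\varphi(12\cdots k)$ because $\ell(\varphi(12\cdots k)) = \ell(12\cdots k) + 1 = k+1$ by the lemma; and the last $k$ entries of $12\cdots(k+1)$ form an instance of $12\cdots k$ omitting the leading $1$, which is the unique left-to-right minimum of $12\cdots(k+1)$. The Theorem then gives that $\varphi$ restricts to a bijection between $S_n(132, 12\cdots k)$ and $A_{2n+1}(132, 12\cdots(k+1))$, whence the equality of cardinalities. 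One could also bypass the Theorem entirely: avoiding $12\cdots k$ means $\ell(w) < k$ and avoiding $12\cdots(k+1)$ means $\ell(\varphi(w)) < k+1$, so the lemma shows directly that the bijection $\varphi\colon S_n(132) \to A_{2n+1}(132)$ matches these two avoidance classes.

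The main obstacle is the lemma, and specifically its lower bound: the key realization is that the up-down (alternating) condition guarantees the entry immediately to the left of the chosen starting position is both smaller in value and earlier in position, so a longest increasing subsequence can always be extended by exactly one using a single odd-indexed (left-to-right minimum) entry. Everything else is essentially forced --- the upper bound by the odd entries being decreasing, and the passage between ``$\ell < k$'' and ``avoids $12\cdots k$'' is routine.
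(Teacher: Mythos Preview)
Your argument is correct, but it is organized differently from the paper's. The paper first handles the enumerative identity by explicitly writing out $\varphi(12\cdots k) = k(k+1)(k-1)(k+2)\cdots 2(2k+1)1$, reading off the increasing subsequence $k(k+1)\cdots(2k+1)$ as the required instance of $q = 12\cdots(k+1)$, and then invoking the Theorem; only afterwards does it derive the longest-increasing-subsequence statement by applying the first part at levels $k$ and $k+1$ and taking a set difference. You instead isolate the single lemma $\ell(\varphi(w)) = \ell(w)+1$, which yields the ``as a result'' clause immediately and also supplies the hypothesis $12\cdots(k+1) \subseteq \varphi(12\cdots k)$ without any explicit computation of $\varphi(12\cdots k)$. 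Your route is a bit more conceptual and makes the two halves of the corollary visibly equivalent; the paper's route is more hands-on and has the advantage of exhibiting $\varphi(12\cdots k)$ concretely, in line with how the subsequent corollaries are proved. One small edge case to note in your lower-bound step: when $n=0$ there is no even-indexed entry to anchor on, but then $\varphi(w)=1$ and $\ell(\varphi(w))=1=\ell(w)+1$ trivially, so the lemma still holds.
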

\begin{proof}
We have that $\varphi(12\cdots k) = k(k + 1)(k -1)(k + 2)(k - 2)\cdots 2(2k + 1)1$ contains the subsequence $k(k + 1)(k + 2)\cdots (2k)(2k + 1)$.  This is an instance of the pattern $q = 12\cdots k(k + 1)$ and $q$ contains an instance of $12\cdots k$ not including any left-to-right minima of $q$, so $\varphi$ is a bijection between $S_n(132, 12\cdots k)$ and $A_{2n + 1}(132, 12\cdots k(k + 1))$, the first half of the claim.  

For the second half, note that the set of permutations in $S_n(132)$ with longest increasing subsequence of length $k$ is $S := S_n(132, 12\cdots k(k + 1))\setminus S_n(132, 12\cdots k)$.  Since $A_{2n + 1}(132, 12\cdots (k+ 1)) \subset A_{2n + 1}(132, 12\cdots (k + 1)(k + 2))$ and $S_n(132, 12\cdots k)\subset S_n(132, 12\cdots k(k + 1))$, $\varphi$ is a bijection between $S$ and $A_{2n + 1}(132, 12\cdots (k+ 1)(k + 2)) \setminus A_{2n + 1}(132, 12\cdots (k + 1))$, the set of $132$-avoiding alternating permutations with longest increasing subsequence of length $k + 1$.
\end{proof}
For $k = 1$ the resulting sequence is $a_{2n + 1}(132, 123) = s_n(132, 12) = 1$.  For $k = 2$ it's $a_{2n + 1}(132, 1234) = s_n(132, 123) = \left\lceil 2^{n - 1}\right\rceil$ (see \cite{simion}).  For $k = 3$ it's the even Fibonacci sequence $a_{2n + 1}(132, 12345) = s_n(132, 1234) = F_{2n - 2}$ defined with initial terms $F_0 = F_1 = 1$ (see \cite{west}), and so on.  These values were calculated directly for the alternating permutations in \cite{mans}, Section 2.

\begin{cor}\label{21k}
For all $n \geq 0$, $k \geq 2$, $a_{2n + 1}(132, 341256\cdots(k + 2)) = s_n(132, 2134\cdots k)$.
\end{cor}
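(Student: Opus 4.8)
The plan is to apply the main theorem of Section~\ref{consequences} with $p = 2134\cdots k$ and $q = 341256\cdots(k+2)$. Since $p$ has only the single inversion $21$ at its front, followed by the increasing run $3,4,\ldots,k$, one checks at once that $p$ avoids $132$, so it is a legitimate choice of pattern and it suffices to verify the two hypotheses of that theorem: that $q$ is contained in $\varphi(p)$, and that $q$ has an instance of $p$ using none of its left-to-right minima. The preliminary and most delicate step is to compute $\varphi(p)$ explicitly.

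To compute $\varphi(p)$ I would build the decreasing binary tree $T(p)$. Using the child rules, $T(p)$ is a left spine $k, k-1, \ldots, 3, 2$ (each the left child of the one above) with $1$ hanging as the right child of $2$. Completing this tree adds a right leaf below each of $k, k-1, \ldots, 3$, a left leaf below $2$, and two leaves below $1$, giving $2k+1$ nodes in all. Relabelling in the decreasing, $132$-compatible way---equivalently, assigning $2k+1, 2k, \ldots, 1$ to the nodes in preorder---and reading the labels in inorder should yield
\[
\varphi(p) = (k+2)(k+3)\,k\,(k+1)(k-1)(k+4)(k-2)(k+5)(k-3)\cdots(2k+1)1,
\]
where after the initial block $(k+2)(k+3)\,k\,(k+1)(k-1)$ the entries come in pairs $(k+j+1)(k+1-j)$ for $j = 3, \ldots, k$. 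As a sanity check, the even-indexed entries $(k+3)(k+1)(k+4)(k+5)\cdots(2k+1)$ are order-isomorphic to $p$, as the characterization of $\varphi^{-1}$ requires.

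With $\varphi(p)$ in hand both hypotheses become routine. For containment, the first four entries $(k+2)(k+3)\,k\,(k+1)$ form a $3412$ pattern, while the entries $k+4, k+5, \ldots, 2k+1$ (which are exactly the spine labels $N_3,\ldots,N_k$, occurring in this order and each to the right of the first four) are all larger and increasing; together they realize $q = 341256\cdots(k+2)$ as a subsequence of $\varphi(p)$. For the second hypothesis, the left-to-right minima of $q$ are precisely its entries $3$ and $1$, in positions $1$ and $3$; deleting them leaves the subsequence $4,2,5,6,\ldots,(k+2)$, which is order-isomorphic to $p = 2134\cdots k$ and contains none of the minima just removed.

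Granting these two facts, the theorem yields that $\varphi$ restricts to a bijection between $S_n(132, 2134\cdots k)$ and $A_{2n+1}(132, 341256\cdots(k+2))$, and hence the claimed equality of cardinalities. I expect the only genuine work to be the explicit determination of $\varphi(p)$: the shape of $T(p)$, its completion, and especially the left/right conventions governing the decreasing labeling are where sign-type errors are easiest to make, and the containment of $q$ must be witnessed by exactly the right subsequence. Once the formula for $\varphi(p)$ is secured, the remainder is a mechanical comparison of patterns and invokes Proposition~\ref{main} and Proposition~\ref{conv} only through the theorem.
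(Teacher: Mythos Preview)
Your proposal is correct and follows essentially the same route as the paper: you compute $\varphi(2134\cdots k)$ explicitly (obtaining the same formula $(k+2)(k+3)k(k+1)(k-1)(k+4)\cdots 2(2k+1)1$), exhibit the subsequence $(k+2)(k+3)k(k+1)(k+4)\cdots(2k+1)$ as an instance of $q$, and then observe that deleting the left-to-right minima $3$ and $1$ from $q$ leaves $4,2,5,6,\ldots,(k+2)$, an instance of $p$. The only difference is cosmetic---the paper phrases the last step in terms of the entries of $\varphi(p)$ rather than the entries of $q$, but the two descriptions name the same subsequence.
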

\begin{proof}
We have $\varphi(2134\cdots k) = (k + 2)(k + 3)k(k + 1)(k - 1)(k + 4)(k - 2)(k + 5) \cdots 2(2k + 1)1$.  This contains the subsequence $(k + 2)(k + 3)k(k + 1)(k + 4)(k + 5) \cdots (2k)(2k + 1)$, an instance of the pattern $q = 341256\cdots (k + 2)$.  This $q$ contains the subsequence $(k + 3)(k + 1)(k + 4)(k + 5)\cdots (2k)(2k + 1)$, an instance of $2134\cdots k$ containing no left-to-right minima of $q$.  Then the result follows.
\end{proof}
The resulting sequences are the same as in the previous case (see \cite{simion, west}).  

Note that in both Corollaries \ref{12k} and \ref{21k} we chose the shortest possible pattern $q$ associated with $p = 123\cdots k$ or $p = 213\cdots k$, respectively.  In both cases there are many other possible choices for $q$ of various lengths: we could take any $q'$ that is order-isomorphic to a subsequence of $\varphi(p)$ that contains our selected instance of $q$.  In our last example, the choice of $q$ is much more restricted:
\begin{cor}\label{k21}
For all $n \geq 0$, $k \geq 2$, $a_{2n + 1}(132, (2k - 1)(2k)(2k - 3)(2k - 2)\cdots3412) = s_n(132, k(k - 1)\cdots 21)$.
\end{cor}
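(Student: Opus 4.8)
The plan is to apply the theorem of Section~\ref{consequences} with the single pattern $p = k(k-1)\cdots 21$, which is decreasing and hence (having no ascent) trivially $132$-avoiding, paired with $q = (2k-1)(2k)(2k-3)(2k-2)\cdots 3412$. Concretely I must (i) compute $\varphi(p)$ explicitly, (ii) exhibit $q$ as an order-isomorphic copy of a subsequence of $\varphi(p)$, and (iii) check that $q$ contains an instance of $p$ using none of its left-to-right minima; the theorem then delivers the asserted bijection and the equality of cardinalities.

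First I would determine $T(p)$. Since $p$ is decreasing, by the child-criterion recalled in Section~\ref{keybij} the vertex labeled $p_i$ has a right child but no left child for $i < k$, and the bottom vertex has neither, so $T(p)$ is the ``all-right'' path on $k$ nodes. Its completion $C(T(p))$ adds a left leaf below each of these $k$ path vertices together with one extra right leaf below the bottom vertex; reading off the in-order sequence of $2k+1$ vertices and imposing the forced decreasing, left-larger-than-right labeling, I expect
\[
\varphi(p) = (2k)(2k+1)(2k-2)(2k-1)(2k-4)(2k-3)\cdots 4\,5\,2\,3\,1,
\]
where the entries in positions $(2i-1,2i)$ are $(2k-2i+2,\,2k-2i+3)$ for $1 \le i \le k$ and the final entry is $1$. (The odd positions carry the added left leaves, the even positions carry the original path vertices, so the even-indexed subsequence is the decreasing pattern $p$, exactly as the construction demands.)

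Next I would locate $q$ inside $\varphi(p)$. Deleting the final entry (the minimum $1$) leaves the length-$2k$ subsequence on positions $1,\ldots,2k$, with value set $\{2,\ldots,2k+1\}$; subtracting $1$ turns the pair in positions $(2i-1,2i)$ into $(2k-2i+1,\,2k-2i+2)$, which is precisely $q=(2k-1)(2k)(2k-3)(2k-2)\cdots 3412$. For the left-to-right-minima condition, note that in $q$ the left-to-right minima are exactly the first entries of the successive pairs, i.e. the odd-valued entries $2k-1, 2k-3, \ldots, 1$ in odd positions, while the second entries $2k, 2k-2, \ldots, 2$ occupy the even positions and, read left to right, form a decreasing sequence order-isomorphic to $p = k(k-1)\cdots 21$. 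This copy of $p$ avoids every left-to-right minimum of $q$, so $q$ is an admissible partner for $p$ and the theorem yields the bijection $\varphi : S_n(132, k(k-1)\cdots 21) \to A_{2n+1}(132, q)$.

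The only genuinely computational step, and the one I expect to require the most care, is pinning down $\varphi(p)$: placing the completion's leaves correctly and propagating the decreasing, left-larger-than-right labeling through the recursion without an off-by-one error. Once $\varphi(p)$ is in hand the remaining two verifications are immediate. It is also worth remarking, in line with the comment preceding the corollary, that here there is essentially no latitude in choosing $q$: the admissible subsequence of $\varphi(p)$ is forced up to whether one retains the final minimum, in contrast with Corollaries~\ref{12k} and~\ref{21k}.
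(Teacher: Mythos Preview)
Your proposal is correct and follows essentially the same approach as the paper: compute $\varphi(p)=(2k)(2k+1)(2k-2)(2k-1)\cdots 45231$, take $q$ to be the permutation order-isomorphic to $\varphi(p)$ with the final $1$ removed, and verify that the even-indexed entries of $q$ give an instance of $p$ avoiding all left-to-right minima. The paper's proof is a one-line statement of this same argument, whereas you have (correctly) spelled out the tree computation and the verification of the minima condition in detail.
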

\begin{proof}
We have $\varphi(k(k - 1)\cdots 21) = (2k)(2k + 1)(2k - 2)(2k - 1)\cdots45231$, and taking $q$ to be the permutation order-isomorphic to $\varphi(p)$ with the final $1$ removed gives the result.
\end{proof}
For $k = 2$ the resulting sequence is $a_{2n + 1}(132, 3412) = s_n(132, 21) = 1$ and for $k = 3$ it's $a_{2n + 1}(132, 563412)= s_n(132, 321) = \frac{n^2 - n + 2}{2}$ (see \cite{simion}).  The $q$ selected in this instance is the only possible choice other than $q' = \varphi(p)$.

\section{A comment on even-length permutations}
Our preceding results all concern alternating permutations of odd length.  However, there is a simple relationship between $132$-avoiding alternating permutations of length $2n + 1$ and $132$-avoiding alternating permutations of length $2n$.  By the observation at the beginning of the proof of Proposition \ref{conv}, if $w \in A_{2n + 1}(132)$ then $w_{2n + 1} = 1$.  Thus, from any $132$-avoiding alternating permutation $w$ of length $2n + 1$ we can build a $132$-avoiding alternating permutation $w'$ of length $2n$ by removing the last entry of $w$ and subtracting $1$ from each other entry, i.e. $w' = (w_1 -1)(w_2 - 1)\cdots(w_{2n} - 1)$.  Conversely, given any $132$-avoiding alternating permutation $w'$ of length $2n$ we can build a $132$-avoiding alternating permutation $w$ of length $2n + 1$ by adding $1$ to each entry and appending a $1$, i.e. $w = (w'_1 + 1)(w'_2+1)\cdots(w'_{2n}+1)1$.  It follows immediately that $a_{2n}(132) = a_{2n + 1}(132)$.  Moreover, $w \in A_{2n + 1}(132)$ avoids a pattern $p = p_1\cdots p_{k - 1}p_k$ with $p_{k} = 1$ if and only if $w'$ avoids $p' = (p_1- 1)\cdots (p_{k-1} - 1)$, while if $p_k \neq 1$ then $w$ avoids $p$ if and only if $w'$ avoids $p$.  Then if $p_1, \ldots, p_i$ are patterns ending in $1$ and $q_1, \ldots, q_j$ are patterns not ending in $1$ we have $a_{2n + 1}(132, p_1, \ldots, p_i, q_1, \ldots, q_j) = a_{2n}(132, p_1', \ldots, p_i', q_1, \ldots, q_j)$, so in particular every enumeration problem concerning $132$-avoiding alternating permutations of even length reduces to a problem concerning permutations of odd length.

\bibliography{paperv4}{}

\begin{thebibliography}{1}

\bibitem{1342}
M.~B\'ona.
\newblock Exact enumeration of 1342-avoiding permutations; a close link with
  labeled trees and planar maps.
\newblock {\em J. Combinatorial Theory, Series A}, 80:257--272, 1997.

\bibitem{egge}
E.~S. Egge.
\newblock Restricted 3412-avoiding involutions, continued fractions, and
  {C}hebyshev polynomials.
\newblock {\em Advances in Applied Mathematics}, 33:451--475, 2004.

\bibitem{1234}
I.~M. Gessel.
\newblock Symmetric functions and {P}-recursiveness.
\newblock {\em J. Combinatorial Theory, Series A}, 53:257--285, 1990.

\bibitem{le}
I.~Le.
\newblock Wilf classes of pairs of permutations of length 4.
\newblock {\em Electronic J. Combinatorics}, 12:R25, 2005.

\bibitem{mans}
T.~Mansour.
\newblock Restricted $132$-alternating permutations and {C}hebyshev
  polynomials.
\newblock {\em Annals of Combinatorics}, 7:201--227, 2003.

\bibitem{der}
T.~Mansour and A.~Robertson.
\newblock Refined restricted permutations avoiding subsets of patterns of
  length three.
\newblock {\em Annals of Combinatorics}, 6:407--418, 2002.

\bibitem{simion}
R.~Simion and F.~W. Schmidt.
\newblock Restricted permutations.
\newblock {\em European J. Combinatorics}, 6:383--406, 1985.

\bibitem{EC1}
R.~P. Stanley.
\newblock {\em Enumerative Combinatorics, Volume 1}.
\newblock Cambridge University Press, 1997.

\bibitem{west}
J.~West.
\newblock Generating trees and forbidden subsequences.
\newblock {\em Discrete Mathematics}, 157:363--372, 1996.

\end{thebibliography}
\bibliographystyle{plain}










\end{document}